\documentclass[11pt]{amsart}
\usepackage{latexsym}
\usepackage{fullpage}
\usepackage{amssymb}
\usepackage{amscd}
\usepackage{float}
\usepackage{graphicx}
\usepackage{amsfonts}
\usepackage{pb-diagram}
 \usepackage{amsmath,amscd}

\setlength{\topmargin}{0cm}
\setlength{\textheight}{22cm}
\setlength{\textwidth}{16cm}
\setlength{\oddsidemargin}{-0.1cm}
\setlength{\evensidemargin}{-0.1cm}

\newtheorem{thm}{Theorem}

\newtheorem{prop}{Proposition}
\newtheorem{defn}{Definition}
\newtheorem{remark}{Remark}

\newtheorem{nt}{Notation}

\begin{document}

\title[The Kauffman bracket skein module of the handlebody of genus 2 via braids]
  {The Kauffman bracket skein module of the handlebody of genus 2 via braids}

\author{Ioannis Diamantis}
\address{ International College Beijing,
China Agricultural University,
No.17 Qinghua East Road, Haidian District,
Beijing, {100083}, P. R. China.}
\email{ioannis.diamantis@hotmail.com}

\keywords{Kauffman bracket polynomial, skein modules, handlebody, solid torus, parting, mixed links, mixed braids.}

\subjclass[2010]{57M27, 57M25, 20F36, 20F38, 20C08}

\setcounter{section}{-1}

\date{}

\begin{abstract}
In this paper we present two new bases, $B^{\prime}_{H_2}$ and $\mathcal{B}_{H_2}$, for the Kauffman bracket skein module of the handlebody of genus 2 $H_2$, KBSM($H_2$). We start from the well-known Przytycki-basis of KBSM($H_2$), $B_{H_2}$, and using the technique of parting we present elements in $B_{H_2}$ in open braid form. We define an ordering relation on an augmented set $L$ consisting of monomials of all different ``loopings'' in $H_2$, that contains the sets $B_{H_2}$, $B^{\prime}_{H_2}$ and $\mathcal{B}_{H_2}$ as proper subsets. Using the Kauffman bracket skein relation we relate $B_{H_2}$ to the sets $B^{\prime}_{H_2}$ and $\mathcal{B}_{H_2}$ via a lower triangular infinite matrix with invertible elements in the diagonal. The basis $B^{\prime}_{H_2}$ is an intermediate step in order to reach at elements in $\mathcal{B}_{H_2}$ that have no crossings on the level of braids, and in that sense, $\mathcal{B}_{H_2}$ is a more natural basis of KBSM($H_2$). Moreover, this basis is appropriate in order to compute Kauffman bracket skein modules of c.c.o. 3-manifolds $M$ that are obtained from $H_2$ by surgery, since isotopy moves in $M$ are naturally described by elements in $\mathcal{B}_{H_2}$.
\end{abstract}

\maketitle

\section{Introduction and overview}\label{intro}

Skein modules were independently introduced by Przytycki \cite{P} and Turaev \cite{Tu} as generalizations of knot polynomials in $S^3$ to knot polynomials in arbitrary 3-manifolds. The essence is that skein modules are quotients of free modules over ambient isotopy classes of links in 3-manifolds by properly chosen local (skein) relations.

\begin{defn}\rm
Let $M$ be an oriented $3$-manifold and $\mathcal{L}_{{\rm fr}}$ be the set of isotopy classes of unoriented framed links in $M$. Let $R=\mathbb{Z}[A^{\pm1}]$ be the Laurent polynomials in $A$ and let $R\mathcal{L}_{{\rm fr}}$ be the free $R$-module generated by $\mathcal{L}_{{\rm fr}}$. Let $\mathcal{S}$ be the ideal generated by the skein expressions $L-AL_{\infty}-A^{-1}L_{0}$ and $L \bigsqcup {\rm O} - (-A^2-A^{-1})L$, where $L_{\infty}$ and $L_{0}$ are represented schematically by the illustrations in Figure~\ref{skein}. Note that blackboard framing is assumed. 

\begin{figure}[!ht]
\begin{center}
\includegraphics[width=1.9in]{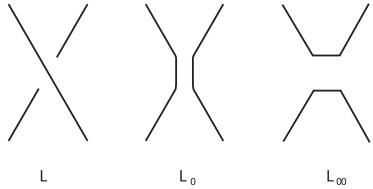}
\end{center}
\caption{The links $L_{\infty}$ and $L_{0}$ locally.}
\label{skein}
\end{figure}

\noindent Then the {\it Kauffman bracket skein module} of $M$, KBSM$(M)$, is defined to be:

\begin{equation*}
{\rm KBSM} \left(M\right)={\raise0.7ex\hbox{$
R\mathcal{L} $}\!\mathord{\left/ {\vphantom {R\mathcal{L_{{\rm fr}}} {\mathcal{S} }}} \right. \kern-\nulldelimiterspace}\!\lower0.7ex\hbox{$ S  $}}.
\end{equation*}

\end{defn}

In \cite{P} the Kauffman bracket skein module of the handlebody of genus 2, $H_2$, is computed using diagrammatic methods by means of the following theorem:

\begin{thm}[\cite{P}]\label{tprz}
The Kauffman bracket skein module of $H_2$, KBSM($H_2$), is freely generated by an infinite set of generators $\left\{x^i\, y^j\, z^k,\ (i, j, k)\in \mathbb{N}\times \mathbb{N}\times \mathbb{N}\right\}$, where $x^i\, y^j\, z^k$ is shown in Figure~\ref{przt}.
\end{thm}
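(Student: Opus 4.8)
The plan is to realize $H_2$ as a thickened pair of pants, $H_2 \cong P \times I$, where $P$ is a $2$-disk with two open subdisks removed (a sphere with three boundary components). Under this identification, framed links in $H_2$ are recorded by their blackboard-framed diagrams on $P$, and the three generators correspond to the three essential boundary-parallel simple closed curves of $P$: $x$ encircles the first hole, $y$ the second, and $z$ encircles both (equivalently, is parallel to the outer boundary). Stacking in the $I$-direction makes $\mathrm{KBSM}(P \times I)$ an $R$-algebra, and since $x$, $y$, $z$ can be realized pairwise disjointly, the monomial $x^i y^j z^k$ is exactly the multicurve consisting of $i$ parallel copies of $x$, $j$ of $y$ and $k$ of $z$. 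So the theorem asserts that this algebra is the free polynomial algebra $R[x,y,z]$, and I would prove the two halves -- spanning and freeness -- separately.

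For spanning, I would start from an arbitrary framed link $L$ in $H_2$ with diagram $D$ on $P$. Applying the Kauffman bracket skein relation $L = A L_\infty + A^{-1} L_0$ at each crossing rewrites $[D]$ as an $R$-linear combination of crossingless diagrams, and the relation $L \sqcup \mathrm{O} = (-A^2 - A^{-2}) L$ deletes every contractible component. Each surviving crossingless diagram is a multicurve in $P$ with no trivial components, so it remains to classify these. Every essential simple closed curve in a pair of pants is boundary-parallel, hence isotopic to one of $x$, $y$, $z$; any disjoint collection is therefore isotopic to some number of parallel copies of each, i.e.\ to $x^i y^j z^k$. This shows that $\{x^i y^j z^k : (i,j,k) \in \N \times \N \times \N\}$ spans $\mathrm{KBSM}(H_2)$.

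The main obstacle is \emph{freeness}: proving that these monomials satisfy no nontrivial $R$-linear relation. The reduction above is a priori many-to-one, since a diagram may be simplified in different orders, so the real content is that the resulting expansion in multicurves is well defined. I would establish this by the standard surface argument: fix a generic height function / ordering on the crossings of diagrams on $P$, and show that the crossing-resolution rewriting system is confluent, so that every skein class has a \emph{unique} normal form as an $R$-combination of multicurves. Freeness then follows, because distinct multicurves are distinct normal forms and hence cannot be related. This confluence step is precisely the delicate part of Przytycki's general theorem that $\mathrm{KBSM}(F \times I)$ is $R$-free on the isotopy classes of multicurves of $F$.

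As a guide to why no relations can arise -- and a concrete way to certify independence once the normal form is in hand -- I would track geometric intersection numbers with three properly embedded cutting disks $a_1 \times I$, $a_2 \times I$, $a_3 \times I$, where $a_1$, $a_2$, $a_3$ are disjoint proper arcs of $P$ joining, respectively, the outer boundary to the first hole, the outer boundary to the second hole, and the two holes to each other. A short computation gives intersection data $\bigl(2(i+k),\,2(j+k),\,2(i+j)\bigr)$ for $x^i y^j z^k$, which determines $(i,j,k)$ uniquely; thus the monomials are pairwise non-isotopic and the spanning set is genuinely a basis. Combining the spanning statement with the uniqueness of the normal form yields that $\mathrm{KBSM}(H_2)$ is freely generated by $\{x^i y^j z^k\}$, as claimed.
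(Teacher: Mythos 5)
The paper does not actually prove this statement: Theorem~\ref{tprz} is quoted verbatim from Przytycki's paper \cite{P} as an imported result, so there is no in-paper argument to compare against. Your proposal reconstructs the standard diagrammatic proof, which is essentially the route of \cite{P} itself: identify $H_2$ with $P\times I$ for $P$ a pair of pants, reduce every framed link diagram to an $R$-combination of multicurves by resolving crossings and deleting trivial loops, and observe that every multicurve without trivial components in $P$ is of the form $x^i y^j z^k$ because every essential simple closed curve in a pair of pants is boundary-parallel. The spanning half of your argument is complete and correct.

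The gap is in the freeness half, which you correctly identify as the crux but do not actually fill. Two remarks. First, the difficulty is not quite where you place it: expanding a fixed diagram by the Kauffman state sum (one smoothing chosen at each crossing, summed over all states) is manifestly independent of the order in which crossings are resolved, so ``confluence of the rewriting'' is not the real issue. What must be proved is that the resulting $R$-combination of multicurves is unchanged under Reidemeister II and III moves and under isotopy of diagrams in $P$, and behaves correctly under Reidemeister I (framing change); only then does the state sum define a well-defined left inverse to the obvious surjection from the free module on $\{x^i y^j z^k\}$ onto KBSM($H_2$), which is what freeness requires. You defer exactly this computation to ``the standard surface argument'' and to ``Przytycki's general theorem'' that KBSM($F\times I$) is free on multicurves --- but that general theorem \emph{is} the statement being proved here (in the case $F=P$), so invoking it is circular; the Reidemeister-invariance check has to be carried out explicitly. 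Second, a minor slip: with your arcs $a_1, a_2, a_3$ the geometric intersection numbers of $x^i y^j z^k$ are $(i+k,\, j+k,\, i+j)$ rather than $\bigl(2(i+k),\, 2(j+k),\, 2(i+j)\bigr)$; the conclusion survives, since the map $(i,j,k)\mapsto (i+k, j+k, i+j)$ has determinant $-2$ and is therefore injective, so the monomials are indeed pairwise non-isotopic multicurves.
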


\noindent Note that in \cite{P}, $H_2$ is presented by a twice punctured plane and diagrams in $H_2$ determine a framing of links in $H_2$. The two dots in Figure~\ref{przt} represent the punctures.

\begin{figure}[!ht]
\begin{center}
\includegraphics[width=2.6in]{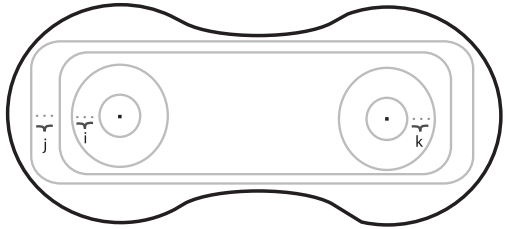}
\end{center}
\caption{The Przytyzki-basis of KBSM($H_2$), $B_{H_2}$.}
\label{przt}
\end{figure}

 It is worth mentioning that in \cite{KL} the authors define the mixed Hecke algebra $H_{2, n}(q)$, which is related to the knot theory of the handlebody of genus 2 and to other families of 3-manifolds, and provide a spanning set as well as a potential linear basis of $H_{2, n}(q)$. Their motivation is the computation of HOMFLYPT skein modules of families of 3-manifolds via braids. Note also that in \cite{DL2, DLP, DL3, DL4}, recent results on the computation of the HOMFLYPT skein module of the lens spaces $L(p, 1)$ via braids are presented with the use of the {\it generalized Iwahori-Hecke algebra of type B}. Finally, in \cite{GM} the authors present a new basis for the Kauffman bracket skein module of the solid torus diagrammatically, and the same basis was recovered in \cite{D} with the use of the {\it Tempereley-Lieb algebra of type B}. 

\bigbreak

In this paper we present two new bases for KBSM($H_2$) with the use of braids and techniques developed in \cite{LR1, LR2, La1,OL, DL2, D}. More precisely, we start from the Przytycki-basis $B_{H_2}$ of KBSM($H_2$) and using Alexander's theorem in $H_2$ and the looping generators illustrated in Figure~\ref{Loops} (see also Definition~\ref{loopings}) we present elements in $B_{H_2}$ to open braid form. Using the technique of parting introduced in \cite{LR1} we have that braids in $H_2$ form a group, $B_{2, n}$. We then introduce the sets $B^{\prime}_{H_2}$ (see Eq.~\ref{nbasis1}) and $\mathcal{B}_{H_2}$ (see Eq.~\ref{nbasis2}) and using the ordering relation defined in Definition~\ref{order}, we show that these sets form bases for KBSM($H_2$). 

\smallbreak

The main results of this paper are the following:

\begin{thm}\label{newbasis1}
The following set forms a basis for KBSM($H_2$):
\begin{equation}\label{nbasis1}
B^{\prime}_{H_2}\ =\ \{t^i\, {\tau^{\prime}_1}^k\, {T^{\prime}_2}^j,\ i, j, k \in \mathbb{N} \}.
\end{equation}
\end{thm}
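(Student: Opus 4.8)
The plan is to prove that $B^{\prime}_{H_2}$ is a basis by exhibiting an invertible change of basis between it and the Przytycki basis $B_{H_2}$ of Theorem~\ref{tprz}, so that $B^{\prime}_{H_2}$ inherits both spanning and linear independence. First I would use Alexander's theorem in $H_2$ together with the looping generators of Definition~\ref{loopings} to realize each Przytycki monomial $x^i\, y^j\, z^k$ as a parted element of the braid group $B_{2,n}$, following the parting technique of \cite{LR1}. In this braid picture the generators $t,\ \tau^{\prime}_1,\ T^{\prime}_2$ of $B^{\prime}_{H_2}$ are the looping generators carrying the extra crossings that distinguish them from the plain loops underlying $x,\ z,\ y$; the exponent matching $i\leftrightarrow i$, $k\leftrightarrow k$, $j\leftrightarrow j$ between the two sets reflects the correspondences $x\leftrightarrow t$, $z\leftrightarrow\tau^{\prime}_1$, $y\leftrightarrow T^{\prime}_2$.

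The core of the argument is to apply the Kauffman bracket skein relation $L=A\,L_{\infty}+A^{-1}L_{0}$ repeatedly to each monomial $t^i\,{\tau^{\prime}_1}^k\,{T^{\prime}_2}^j$, resolving every crossing present on the braid level in these looping generators but absent in the corresponding $x^i\,y^j\,z^k$. Each resolution writes the element as a formal $R$-linear combination of monomials in the augmented set $L$ of all loopings. The key claim, to be proved by induction on the total number of crossings resolved, is that this expansion produces $A^{s}\,x^i\,y^j\,z^k$ for some $s\in\Z$ (the diagonal term, whose coefficient $A^s$ is a unit in $R$) plus a combination of monomials that are strictly smaller than $x^i\,y^j\,z^k$ in the ordering of Definition~\ref{order}. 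Equivalently, expressing $B^{\prime}_{H_2}$ in terms of $B_{H_2}$ gives an infinite matrix that is lower triangular for this ordering, with invertible diagonal entries.

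Granting this triangular structure, invertibility is immediate. A lower triangular matrix over $R$ with respect to a well-founded ordering whose diagonal entries lie in $R^{\times}$ can be inverted entry by entry, and its inverse is again supported on and below the diagonal. Consequently $B^{\prime}_{H_2}$ spans KBSM($H_2$) and is $R$-linearly independent, hence it is a basis.

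The main obstacle will be the bookkeeping in the inductive step: one must verify that every smoothing other than the one yielding the diagonal term strictly lowers the ordering, which requires the ordering of Definition~\ref{order} to be compatible with both the crossing resolutions and the parting moves, and that no resolution can reintroduce a monomial equal to or larger than $x^i\,y^j\,z^k$. Handling the $t$-, $\tau^{\prime}_1$- and $T^{\prime}_2$-contributions simultaneously, rather than one generator at a time, and confirming that the top resolution contains no contractible unknots (so that the diagonal coefficient is a genuine power of $A$ rather than a factor of $-A^2-A^{-2}$), is where the ordering on the augmented set $L$ must be exploited most carefully.
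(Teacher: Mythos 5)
Your proposal is correct and follows essentially the same route as the paper: converting Przytycki monomials to parted braid form, imposing the ordering of Definition~\ref{order} on the augmented set $L$, and obtaining a lower triangular change-of-basis matrix with invertible (power of $A$) diagonal entries by resolving crossings with the Kauffman bracket relation, the triangularity being established by exactly the induction you describe (this is the content of Proposition~\ref{tauind} and the remark following it). The only deviation is that you expand $B^{\prime}_{H_2}$ in terms of $B_{H_2}$ whereas the paper expands elements of $B_{H_2}$ as the homologous word in $B^{\prime}_{H_2}$ plus lower-order terms; since the matrix is triangular with unit diagonal in either direction, this is immaterial to the conclusion.
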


\smallbreak

The method for proving Theorem~\ref{newbasis1} is the following:

\smallbreak

\begin{itemize}
\item[$\bullet$] we first recall the total ordering defined in \cite{DL2} and extend it to monomials of $t^{\prime}_i$'s, $\tau^{\prime}_k$'s and $T^{\prime}_j$'s.
\smallbreak
\item[$\bullet$] We perform the technique of parting and separate the loop generators into three sets, each one independent from the rest. 
\smallbreak
\item[$\bullet$] Applying the Kauffman bracket skein relation we express elements in the $B_{H_2}$ basis to elements in the set $B^{\prime}_{H_2}$ in a unique way, and we relate $B_{H_2}$ to $B^{\prime}_{H_2}$ by a lower triangular matrix with invertible elements in the diagonal. The result follows.
\end{itemize}

\bigbreak

We then move toward a different basic set for KBSM($H_2$). More precisely we have:

\begin{thm}\label{newbasis2}
The following set forms a basis for KBSM($H_2$):
\begin{equation}\label{nbasis2}
\mathcal{B}_{H_2}\ =\ \{t^i\, {\tau^{\prime}}^k\, {T^{\prime}}^j,\ i, j, k \in \mathbb{N} \}.
\end{equation}
\end{thm}

\smallbreak

The method for proving Theorem~\ref{newbasis2} is similar to the one followed in proving Theorem~\ref{newbasis1}. In particular, we start from elements in the new basis $B^{\prime}_{H_2}$ in algebraic mixed braid form and we apply the Kauffman bracket skein relation on the braid crossings. In that way all crossings on the braid level are canceled and elements in the set $\mathcal{B}_{H_2}$ are obtained.

\smallbreak

The paper is organized as follows: In \S\ref{basics} we recall the setting and the essential techniques and results from \cite{OL, La1, LR1, LR2, DL1}. More precisely, we present isotopy moves for knots and links in $H_2$ and we then describe braid equivalence for knots and links in $H_2$. In \S\ref{H_2} we present the Przytycki-basis of KBSM($H_2$) in open braid form. In \S~2 we define the augmented set $L$, consisting of all possible monomials in the looping generators, and in \S~2.1 we introduce an ordering relation on $L$. We show that the set $L$ equipped with this ordering relation is a totally ordered (Proposition~\ref{totord}) and well-ordered (Proposition~\ref{totord2}) set and, using the notion of homologous elements (see Definition~\ref{hom}), we prove that the sets $B^{\prime}_{H_2}$ and $\mathcal{B}_{H_2}$ form bases for KBSM($H_2$).

\bigbreak

\noindent \textbf{Acknowledgments}\ \ I would like to acknowledge several discussions with Ms. Marianna Press. Moreover, financial support by the International College Beijing, China Agricultural University is gratefully acknowledged.

\section{Preliminaries}\label{basics}

\subsection{Mixed links and isotopy in $H_2$}

We consider $H_2$ to be

$$S^3\backslash \{{\rm open\ tubular\ neighborhood\ of}\ I_2 \},$$

\smallbreak

\noindent where $I_2$ denotes the point-wise fixed identity braid on two indefinitely extended strands meeting at the point at infinity. Thus $H_2$ may be represented in $S^3$ by the braid $I_2$. As explained in \cite{LR1, LR2, La1, OL, DL1}, an oriented link $L$ in $H_2$ can be represented by an oriented \textit{mixed link} in $S^{3}$, that is a link in $S^{3}$ consisting of the fixed part $\widehat{I_2}$ and the moving part $L$ that links with $\widehat{I_2}$ (see Figure~\ref{mlnk}). A \textit{mixed link diagram} is a diagram $\widehat{I_2}\cup \widetilde{L}$ of $\widehat{I_2}\cup L$ on the plane of $\widehat{I_2}$, where this plane is equipped with the top-to-bottom direction of $I_2$.

\begin{figure}[!ht]
\begin{center}
\includegraphics[width=1.9in]{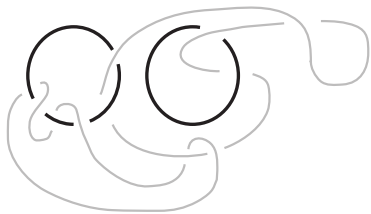}
\end{center}
\caption{A mixed link.}
\label{mlnk}
\end{figure}

We now translate isotopy of an oriented link $L$ in $H_2$ to isotopy of its corresponding mixed link in $S^{3}$. Mixed link isotopy consists of a sequence of moves that keep the oriented $\widehat{I_2}$ point-wise fixed, that is, isotopy in $S^{3}$ together with the \textit{mixed Reidemeister moves}. Note that $L$ will avoid the 2 hollow tubes of $H_2$, and also it will not pass beyond the boundary of $H_2$ from either end. In terms of diagrams we have the following:

\smallbreak

The mixed link equivalence in $S^{3}$ includes the classical Reidemeister moves and the mixed Reidemeister moves, which involve the fixed and the standard part of the mixed link, keeping $\widehat{I_2}$ pointwise fixed (cf. Thm.~5.5 \cite{LR1} \& Thm.~1 \cite{OL}).

\subsection{Mixed braids and braid equivalence for knots and links in $H_2$}

In this subsection we translate isotopy of links in $H_2$ into braid equivalence. We need to introduce the notion of a geometric mixed braid first. A \textit{geometric mixed braid} related to $H_2$ and to a link $L$ in $H_2$ is an element of the group $B_{2+n}$, where $2$ strands form the fixed \textit{identity braid} $I_2$ representing $H_2$ and $n$ strands form the {\it moving subbraid\/} $\beta$ representing the link $L$ in $H_2$. For an illustration see the middle picture of Figure~\ref{mbraid}. By the Alexander theorem for knots and links in $H_2$ (cf. Thm.~5.4 \cite{LR1} \& Thm.~2 \cite{OL}), a mixed link diagram $\widehat{I_2}\cup \widetilde{L}$ of $\widehat{I_2}\cup L$ may be turned into a \textit{geometric mixed braid} $I_2\cup \beta$ with isotopic closure.

\smallbreak

We further need the notion of the $L$-moves (\cite[Definition 2.1]{LR1}). An {\it $L$-move} on a geometric mixed braid $I_2 \bigcup \beta$,
consists in cutting an arc of the moving subbraid $\beta$ open and pulling the upper cutpoint downward and
the lower  upward, so as to create a new pair of braid strands with corresponding endpoints
(on the vertical line of the cutpoint), and such that both strands  cross entirely  {\it over} or {\it under}
with the rest of the braid. Stretching the new strands over  will give rise to an {\it $L_o$-move\/} and under to an {\it  $L_u$-move\/}.
For an illustration see Figure~\ref{mbraid}.

\begin{figure}[!ht]
\begin{center}
\includegraphics[width=3.7in]{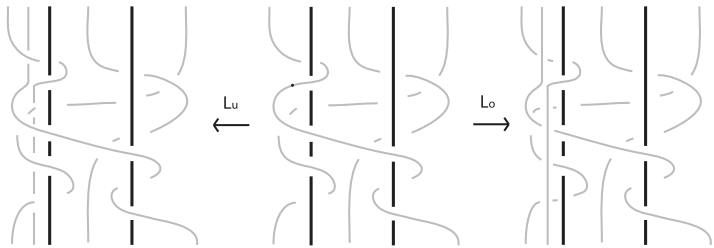}
\end{center}
\caption{A geometric mixed braid and the two types of $L$-moves.}
\label{mbraid}
\end{figure}

Two geometric mixed braids shall be called {\it $L$-equivalent} if and only if they differ by a sequence of $L$-moves and braid isotopy. Note that an $L$-move does not touch the fixed subbraid $I_2$.

\begin{thm}[Geometric braid equivalence in $H_2$, Theorem 3 \cite{OL}] \label{geommarkov}
Two oriented links in $H_2$ are isotopic if and only if any two corresponding geometric mixed braids of theirs in $S^3$ differ by a finite sequence of $L$-moves and isotopies of geometric mixed braids.
\end{thm}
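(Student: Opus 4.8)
The plan is to prove the two implications separately, treating the ``if'' direction as the routine one and concentrating the real work on the ``only if'' direction, which is the Markov-type statement. For the ``if'' direction I would verify that each allowed move on geometric mixed braids preserves the isotopy class of the closure in $H_2$. Braid isotopy clearly does, since closing up a braid isotopy yields an isotopy of the closure supported away from $I_2$. For an $L$-move, the newly created pair of strands, once the braid is closed, bounds a thin band that can be collapsed back to the original arc: because the two new strands cross \emph{entirely} over (for an $L_o$-move) or \emph{entirely} under (for an $L_u$-move) the rest of the braid, there is no interlocking with the other strands, so the collapsing isotopy can be carried out inside $H_2$ without ever touching the fixed subbraid $I_2$. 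Hence $L$-equivalent geometric mixed braids have isotopic closures in $H_2$.

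For the ``only if'' direction I would start from the diagrammatic description of isotopy recorded above: two mixed link diagrams represent isotopic links in $H_2$ if and only if they differ by planar isotopy, the classical Reidemeister moves, and the mixed Reidemeister moves, all keeping $\widehat{I_2}$ pointwise fixed. Fixing a finite sequence of diagrams interpolating between the two given mixed links, I would apply Lambropoulou's braiding algorithm (the one underlying the Alexander theorem for $H_2$ cited above) to each diagram in the sequence, producing a corresponding sequence of geometric mixed braids. The goal is then to prove that consecutive braids in this sequence are $L$-equivalent; chaining these equivalences together would give the desired $L$-equivalence of the two endpoints.

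The technical core, and the main obstacle, is a pair of claims about the braiding algorithm. First, I would need to show that its output is well defined up to $L$-moves and braid isotopy, i.e. that the choices made while braiding (subdivision points, and the order in which up-arcs are resolved) change the resulting braid only by $L$-moves; this is exactly where the $L_o$ versus $L_u$ distinction is essential, since an up-arc may be braided by pulling it either over or under the rest. Second, I would check move by move that each elementary diagram change --- a planar isotopy, a classical Reidemeister I, II, or III move, and each mixed Reidemeister move involving the fixed strands --- is transformed by the braiding algorithm into a finite sequence of $L$-moves and braid isotopies. The delicate cases are the moves that create or cancel crossings, since these are precisely what $L$-moves are designed to absorb: a Reidemeister I kink becomes a single $L$-move, while a move pushing an arc across a local maximum or minimum of the height function is realized by introducing and then resolving an $L$-move pair.

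Finally, since the statement is quoted as Theorem~3 of \cite{OL}, I would organize the argument as a specialization of the general $L$-move braid equivalence theorem of Lambropoulou--Rourke \cite{LR1}, rather than redoing the entire $L$-calculus from scratch. In this reduction the only genuinely new points to verify are that the fixed subbraid $I_2$ is never disturbed --- every $L$-move and every braiding step is performed on the moving subbraid $\beta$ alone, while $I_2$ serves merely as a fixed obstacle that $\beta$ is permitted to link --- and that the two fixed strands introduce no phenomena beyond those already handled in the general theory. I expect the bookkeeping for the mixed Reidemeister moves (those interacting with $I_2$) to be the fiddliest part, but conceptually they are handled exactly like ordinary crossings, with the single extra constraint that $\widehat{I_2}$ remains pointwise fixed throughout.
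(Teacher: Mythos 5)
This statement is not proved in the paper at all: it is quoted verbatim as Theorem~3 of \cite{OL}, and the only ``proof'' the paper offers is the citation. Your outline is, however, essentially the strategy by which the result is actually established in \cite{OL} and in the general $L$-move framework of \cite{LR1}: the easy direction is checked move by move (braid isotopy and the collapsing of the band created by an $L$-move, using the all-over/all-under condition), and the hard direction proceeds by braiding a chain of mixed link diagrams and reducing everything to two lemmas --- that the braiding algorithm is well defined up to $L$-moves and braid isotopy, and that each (classical or mixed) Reidemeister move performed away from, respectively near, $\widehat{I_2}$ translates into a finite sequence of $L$-moves. Be aware that what you have written is a plan rather than a proof: both of those lemmas, which carry all the technical weight, are only named, and the verification that the parting/braiding steps never disturb the fixed subbraid $I_2$ (your last paragraph) is precisely the content that distinguishes Theorem~3 of \cite{OL} from the $S^3$ case. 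So your proposal identifies the correct skeleton and the correct hard points, and it coincides with the approach of the cited source; there is no competing argument in the present paper to compare it against.
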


\subsection{Algebraic mixed braids}

We now pass from the geometric mixed braid equivalence to an algebraic statement for links in $H_2$. We first perform the technique of {\it parting} introduced in \cite{LR2} in order to separate the moving strands from the fixed strands that represents $H_2$. This can be realized by pulling each pair of corresponding moving strands to the right and {\it over\/} or {\it under\/} the fixed strand that lies on their right. More precisely, we start from the rightmost pair respecting the position of the endpoints. The result of parting is a {\it parted mixed braid}. If the strands are pulled always over the strands of $I_2$, then this parting is called {\it standard parting}. See Figure~\ref{parting} for the standard parting of an abstract mixed braid. We will also call parting the converse operation, where a parted mixed braid is turned to a geometric mixed braid again.

\begin{figure}[!ht]
\begin{center}
\includegraphics[width=2.5in]{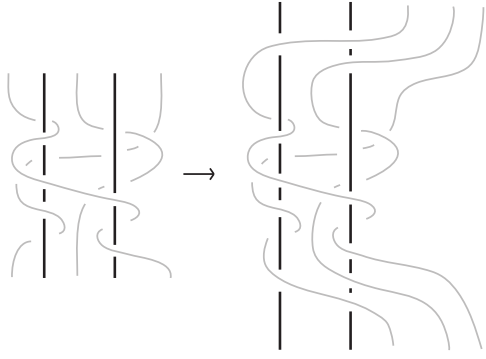}
\end{center}
\caption{ Parting a geometric mixed braid. }
\label{parting}
\end{figure}

An \textit{algebraic mixed braid} is a mixed braid on $2+n$ strands such that the first $2$ strands are fixed and form the identity braid on $2$ strands $I_2$ and the next $n$ strands are moving strands and represent a link in $H_2$. The set of all algebraic mixed braids on $2+n$ strands forms a subgroup of $B_{2+n}$, denoted $B_{2,n}$, called the {\it mixed braid group}. The mixed braid group $B_{2,n}$ has been introduced and studied in \cite{La1} (see also \cite{OL}) and it is shown that it has the presentations:

\begin{center} \label{B}
$
B_{2,n} = \left< \begin{array}{ll}  \begin{array}{l}
t, \tau,  \\
\sigma_1, \ldots ,\sigma_{n-1}  \\
\end{array} &
\left|
\begin{array}{l} \sigma_k \sigma_j=\sigma_j \sigma_k, \ \ |k-j|>1   \\
\sigma_k \sigma_{k+1} \sigma_k = \sigma_{k+1} \sigma_k \sigma_{k+1}, \ \  1 \leq k \leq n-1  \\
t \sigma_k = \sigma_k t, \ \ k \geq 2   \\
\tau \sigma_k = \sigma_k \tau, \ \ k \geq 2    \\
 \tau \sigma_1 \tau \sigma_1 = \sigma_1 \tau \sigma_1 \tau \\
 t \sigma_1 t \sigma_1 = \sigma_1 t \sigma_1 t \\
 \tau (\sigma_1 t {\sigma^{-1}_1}) =  (\sigma_1 t {\sigma^{-1}_1})  \tau
\end{array} \right.  \end{array} \right>,
$
\end{center}

or

\begin{center} \label{B2}
$
B_{2,n} = \left< \begin{array}{ll}  \begin{array}{l}
t, T,  \\
\sigma_1, \ldots ,\sigma_{n-1}  \\
\end{array} &
\left|
\begin{array}{l} \sigma_k \sigma_j=\sigma_j \sigma_k, \ \ |k-j|>1   \\
\sigma_k \sigma_{k+1} \sigma_k = \sigma_{k+1} \sigma_k \sigma_{k+1}, \ \  1 \leq k \leq n-1  \\
t \sigma_k = \sigma_k t, \ \ k \geq 2    \\
T \sigma_k = \sigma_k T, \ \ k \geq 2    \\
 t \sigma_1 T \sigma_1 = \sigma_1 T \sigma_1 t  \\
\end{array} \right.  \end{array} \right>,
$
\end{center}

\noindent where the {\it loop generators} $t, \tau, T$ are as illustrated in Figure~\ref{Loops}.

\begin{figure}[!ht]
\begin{center}
\includegraphics[width=3.1in]{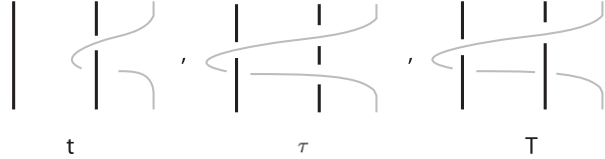}
\end{center}
\caption{ The loop generators $t, \tau$ and $T$ of $B_{2,n}$. }
\label{Loops}
\end{figure}

Let now $\mathcal{L}$ denote the set of oriented knots and links in $H_2$. Then, isotopy in $H_2$ is translated on the level of algebraic mixed braids by means of the following theorem:

\begin{thm}[Theorem~4, \cite{OL}] \label{markov}
 Let $L_{1} ,L_{2}$ be two oriented links in $H_2$ and let $I_2\cup \beta_{1} ,{\rm \; }I_2\cup \beta_{2}$ be two corresponding algebraic mixed braids in $S^{3}$. Then $L_{1}$ is isotopic to $L_{2}$ in $H_2$ if and only if $I_2\cup \beta_{1}$ is equivalent to $I_2\cup \beta_{2}$ by a finite sequence of L-moves and the braid relations in $\underset{n=1}{\overset{\infty}{\cup}} B_{2, n}$.
\end{thm}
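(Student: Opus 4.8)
The plan is to derive this algebraic braid equivalence from its geometric counterpart, Theorem~\ref{geommarkov}, by analyzing the effect of parting. Since parting and its converse set up a correspondence between geometric mixed braids and algebraic (parted) mixed braids, the task reduces to showing that the two generating moves of the geometric equivalence---$L$-moves and geometric braid isotopy---translate, after parting, into $L$-moves together with the defining relations of $\underset{n=1}{\overset{\infty}{\cup}} B_{2,n}$, and conversely. Thus the whole argument rests on a careful bookkeeping of what parting does to each type of move.

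First I would establish that parting is well-defined up to the relations of $B_{2,n}$. A given geometric mixed braid can be parted in several ways: one may pull each moving pair over or under the fixed strand lying on its right, and one may vary the order in which the successive pairs are pulled. The key lemma, in the spirit of \cite{LR2}, is that any two partings of the same geometric mixed braid yield algebraic mixed braids equivalent under braid isotopy and the relations of $B_{2,n}$. The local model to verify is a single moving strand passing the fixed strands: exchanging an over-parting for an under-parting produces exactly the mixed relations recorded in the presentation (such as $t\sigma_k=\sigma_k t$ for $k\geq 2$ and $t\sigma_1 T\sigma_1=\sigma_1 T\sigma_1 t$), while reordering the pulls yields only far-commutation relations $\sigma_k\sigma_j=\sigma_j\sigma_k$.

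For the forward direction, suppose $L_1$ and $L_2$ are isotopic. By Theorem~\ref{geommarkov} their geometric mixed braids differ by a finite sequence of $L$-moves and geometric isotopies, and I would track this sequence through the parting map. A geometric braid isotopy either leaves the parted braid unchanged or replaces it by an equivalent one through braid relations, by the well-definedness just described; a geometric $L$-move, once its newly created strand pair is parted along with the rest, becomes an algebraic $L$-move up to braid relations. Here one must check that the extra pair of strands produced by the $L$-move can be carried over or under the fixed strands consistently with the chosen parting convention. Composing these local translations shows that the parted braids of $L_1$ and $L_2$ are related by $L$-moves and braid relations in $\cup_n B_{2,n}$.

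The converse is the easier direction: every relation in the presentation of $B_{2,n}$ is realized by a planar isotopy of the corresponding parted mixed braid, hence of its unparting to a geometric braid, and every algebraic $L$-move is in particular a geometric $L$-move; therefore algebraically equivalent braids have isotopic closures in $H_2$, again by Theorem~\ref{geommarkov}. The hard part will be the well-definedness of parting modulo $B_{2,n}$---specifically the precise identification of which mixed relations appear when an over-parting is exchanged for an under-parting near the fixed strands---since once that correspondence is in place, the rest of the argument follows by matching each geometric move with its algebraic image.
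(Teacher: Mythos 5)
The paper offers no proof of this statement at all: it is imported verbatim as Theorem~4 of \cite{OL} (building on \cite{LR1, LR2, La1}) and used as a black box, so there is no internal argument to compare yours against. That said, your reconstruction does follow the architecture of the cited sources --- reduce the algebraic statement to the geometric one (Theorem~\ref{geommarkov}) by pushing $L$-moves and isotopies through the parting map and checking that parting is well defined modulo the allowed moves --- so as an outline it is aimed in the right direction.

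There is, however, a genuine flaw in the key lemma you isolate as ``the hard part.'' You claim that any two partings of the same geometric mixed braid are equivalent under braid isotopy and the relations of $B_{2,n}$ alone, and that exchanging an over-parting for an under-parting ``produces exactly the mixed relations recorded in the presentation,'' citing $t\sigma_k=\sigma_k t$ and $t\sigma_1 T\sigma_1=\sigma_1 T\sigma_1 t$. This cannot be right as stated: the listed relations are identities in the group $B_{2,n}$, i.e.\ they relate different words for the \emph{same} braid, whereas pulling a moving strand over rather than under a fixed strand generally produces a genuinely \emph{different} element of $B_{2,n}$ whose closure merely happens to be isotopic in $H_2$ (the strand is swung around the fixed strand, which is an isotopy in the complement but not a braid isotopy). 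Consequently the over/under ambiguity is not absorbed by the group relations; in \cite{LR2} the well-definedness of parting is established only up to $L$-moves together with the braid relations, and it is precisely the $L$-moves that account for the change of crossing convention against the fixed strands. Your overall strategy survives once the lemma is weakened to ``any two partings differ by $L$-moves and braid relations,'' since $L$-moves are among the allowed moves in the statement; but as written, the central claim you propose to verify is false, and the local analysis you sketch (attributing the over/under exchange to the mixed commutation relations) would not go through.
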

\

\subsection{The Kauffman bracket skein module of $H_2$ via braids}\label{H_2}

In this subsection we present elements in the Przytyzki-basis of the Kauffman bracket skein module of $H_2$ in open braid form. For that we need the following:

\begin{defn}\label{loopings}\rm
We define the following elements in $B_{2, n}$:
\begin{equation}\label{algloops}
t_i^{\prime}\ :=\ \sigma_i\ldots \sigma_1\, t\, \sigma^{-1}_1 \ldots \sigma^{-1}_i,\ \ \tau_k^{\prime}\ :=\ \sigma_k\ldots \sigma_1\, \tau\, \sigma^{-1}_1 \ldots \sigma^{-1}_k,\ \ T_j^{\prime}\ :=\ \sigma_j\ldots \sigma_1\, T\, \sigma^{-1}_1 \ldots \sigma^{-1}_j
\end{equation}
\end{defn}

Note that the $t_i^{\prime}$'s, the $\tau_k^{\prime}$'s and the $T_j^{\prime}$'s are conjugate but not commuting. In order to simplify the algebraic expressions obtained throughout this paper, we introduce the following notation:

\begin{nt}\label{nt1}\rm
For $i, j\in \mathbb{N}$ such that $i<j$, we set:

\[
t^{a_i, a_j}_{i, j} \ := \ {t_i^{\prime}}^{a_i}\, {t_{i+1}^{\prime}}^{a_{i+1}}\, \ldots\, {t_j^{\prime}}^{a_j},\quad \tau^{b_{i, j}}_{i, j} \ := \ {\tau_i^{\prime}}^{b_i}\, {\tau_{i+1}^{\prime}}^{b_{i+1}}\, \ldots \, {\tau_j^{\prime}}^{b_j},\quad T^{c_{i, j}}_{i, j} \ := \ {T_i^{\prime}}^{c_i}\, {T_{i+1}^{\prime}}^{c_{i+1}}\, \ldots\, {T_j^{\prime}}^{c_j},
\]

\noindent where $a_k$, $b_k$ and $c_k \in \mathbb{N}, \forall k$, and

\begin{equation}
t_{i, j} \ := \ t_i^{\prime}\, t_{i+1}^{\prime}\, \ldots\, t_j^{\prime},\quad \tau_{i, j} \ := \ \tau_i^{\prime}\, \tau_{i+1}^{\prime}\, \ldots \, \tau_j^{\prime},\quad T_{i, j} \ := \ T_i^{\prime}\, T_{i+1}^{\prime}\, \ldots\, T_j^{\prime}.
\end{equation}

\noindent We also set:
$$t^{a_i, a_j}_{i, j}\, \tau^{b_{k, l}}_{k, l}\, T^{c_{m, n}}_{m, n}\, :=\, {t_i^{\prime}}^{a_i}\, \ldots \, {t_j^{\prime}}^{a_j}\, {\tau_{k}^{\prime}}^{b_{k}}\, \ldots \, {\tau_l^{\prime}}^{b_l}\, {T_{m}^{\prime}}^{c_{m}}\, \ldots\, {T_n^{\prime}}^{c_n},$$
\noindent where $i, j, k, l, m, n\in \mathbb{N}$ such that $i<j<k<l<m<n$.

\smallbreak

\noindent Finally, we note that ${t_i^{\prime}}^0\, {\tau_k^{\prime}}^0\, {T_j^{\prime}}^0$ represents the unknot.
\end{nt}

\smallbreak

We observe now that an element $x^iy^jz^k$ in the basis of KBSM($H_2$) described in Theorem~\ref{tprz} can be illustrated equivalently as a mixed link in $S^3$ so that we correspond the element $x^iy^jz^k$ to the minimal mixed braid representation in which we group the twists around each fixed strand. Figure~\ref{prbr} illustrates this correspondence. 

\begin{figure}[!ht]
\begin{center}
\includegraphics[width=4.1in]{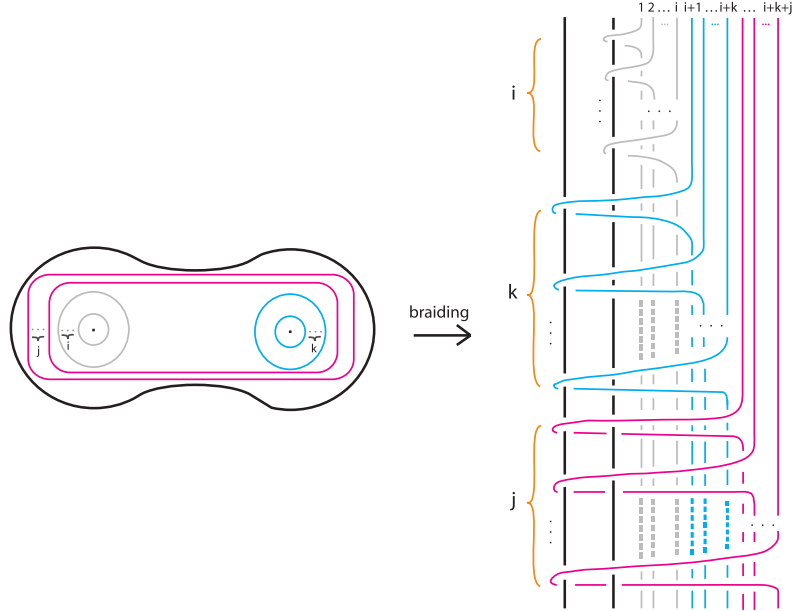}
\end{center}
\caption{ The Przytycki-basis in braid form. }
\label{prbr}
\end{figure}

Then, the set

$$B_{H_2}\, :=\, \{(tt^{\prime}_1\ldots t^{\prime}_i)\, (\tau^{\prime}_{i+1}\tau^{\prime}_{i+2}\ldots \tau^{\prime}_{i+k})\, (T^{\prime}_{i+k+1}T^{\prime}_{i+k+2}\ldots T^{\prime}_{i+k+j})\}, \ (i, k, j)\in \mathbb{N}\times \mathbb{N}\times \mathbb{N}$$

\noindent is a basis of KBSM($H_2$) in braid form. Equivalently, using Notation~\ref{nt1} we have that

$$B_{H_2}\ :=\ \{t_{0, i}\, \tau_{i+1, i+k}\, T_{i+k+1, i+k+j} \}, \ (i, k, j)\in \mathbb{N}\times \mathbb{N}\times \mathbb{N}.$$

\begin{remark}\label{sets}\rm
When expressing an element in the Przytycki-basis in open braid form, we obtain three different sets of strands: one set consists of the loopings $t^{\prime}_i$'s, the other set consists of the loopings $\tau^{\prime}_k$'s and the last one consists of the loopings $T^{\prime}_j$'s.
\end{remark}

\section{Two different bases for KBSM($H_2$)}

In this section we prove Theorems~\ref{newbasis1} and \ref{newbasis2}. For that we need to introduce the augmented set $L$, consisting of monomials in the $t_i^{\prime}$'s, $\tau_k^{\prime}$'s and $T_j^{\prime}$'s with exponents in $\mathbb{N}\cup \{0\}$ and  the notion of the index of a word in $L$. 

\begin{defn}\rm
Let $i_1, i_2, k_1, k_2, j_1, j_2 \in \mathbb{N}\cup \{0\}\, :\, i_1<i_2<k_1<k_2<j_1<j_2\ {\rm and}\ a_i, b_k, c_j \in \mathbb{N}\cup \{0\}\ \forall i, k, j$. We define the set $L$ to be:
\[
L\ :=\ \left\{t_{i_1, i_2}^{a_{i_1, i_2}}\, \tau_{k_1, k_2}^{b_{k_1, k_2}}\, T_{j_1, j_2}^{c_{j_1, j_2}}\right\}.
\]
\end{defn}

It is worth mentioning here that the sets $B_{H_2}$, $B^{\prime}_{H_2}$ and $\mathcal{B}_{H_2}$ are subsets of $L$. We now introduce the notions of index, length and sum of exponents of a monomial in $L$.

\begin{defn}\rm
\begin{itemize}
\item[i.] The $t$-index of a monomial $w$ in $L$, denoted $ind_t(w)$, is defined to be the highest index of the $t^{\prime}$'s in $w$. Similarly, the $\tau$-index, $ind_{\tau}(w)$, is defined to the highest index of the $\tau^{\prime}$'s in $w$ and the $T$-index, $ind_{T}(w)$, is defined to the highest index of the $T^{\prime}$'s in $w$.

\smallbreak



\item[ii.] We define the $t$-length of $w$, denoted by $l_t$, to be the difference $(i_2-i_1)$. Similarly, the $\tau$-length of $w$, denoted by $l_{\tau}$, is defined to be the difference $(k_2-k_1)$ and the $T$-length of $w$, denoted by $l_T$, is defined to be the difference $(j_2-j_1)$.

\smallbreak

\item[iii.] We finally define $exp_t(w)$ to be the sum of the exponents in the $t^{\prime}_i$'s in $w$. Similarly, $exp_{\tau}(w)$ is defined to be the sum of the exponents in the $\tau^{\prime}_i$'s in $w$ and $exp_T(w)$ is defined to be the sum of the exponents in the $T^{\prime}_i$'s in $w$. Finally, we define $exp(w)$ to be the sum of all exponents in $w$.
\end{itemize}
\end{defn}

\subsection{An ordering in the basis of $B_{H_2}$}

In \cite{DL2} an ordering relation is defined on elements in the bases of the HOMFLYPT skein module of the solid torus. We extend this ordering relation here to the set $L$ consisting of all monomials in the $t^{\prime}_i$'s, $\tau^{\prime}_k$'s and $T_j^{\prime}$'s. This ordering relation passes naturally to the basis $B_{H_2}$ of the Kauffman bracket skein module of $H_2$.

\begin{defn}\label{order}\rm
Let $a_i , b_j, c_k, d_l, e_m, f_n \in \mathbb{N}$ for all $i, j, k, l, m, n$ and:

\[
w=t_{i_1, i_2}^{a_{i_1}, a_{i_2}}\, \tau_{j_1, j_2}^{b_{j_1}, b_{j_2}}\, T_{k_1, k_2}^{c_{k_1}, c_{k_2}}\ {\rm and}\ u=t_{l_1, l_2}^{d_{l_1}, d_{l_2}}\, \tau_{m_1, m_2}^{e_{m_1}, e_{m_2}}\, T_{n_1, n_2}^{f_{n_1}, f_{n_2}}.
\]

\smallbreak

\noindent Then, we define the following ordering:

\smallbreak

\begin{itemize}
\item[(A)] If $exp(w)\,  <\, exp(u)$, then $w<u$.

\vspace{.1in}

\item[(B)] If $exp(w)\, =\, exp(u)$, then:

\vspace{.1in}

\noindent  (i) if $ind_T(w)\, <\, ind_T(u)$, then $w<u$,

\vspace{.1in}

\noindent  (ii) if $ind_T(w)\, =\, ind_T(u)$, then:

\vspace{.1in}

\noindent \ \ \ \ (a) if $ind_{\tau}(w)\, <\, ind_{\tau}(u)$, then $w<u$,

\vspace{.1in}

\noindent \ \ \ \ (b) if $ind_{\tau}(w)\, =\, ind_{\tau}(u)$, then:

\vspace{.1in}

\noindent \ \ \ \ \ \ \ ($\bullet$) if $ind_{t}(w)\, <\, ind_{t}(u)$, then $w<u$, 

\vspace{.1in}

\noindent \ \ \ \ \ \ \ ($\bullet$) if $ind_{t}(w)\, =\, ind_{t}(u)$, then: 

\vspace{.1in}

\noindent \ \ \ \ \ \ \ \ \ \ \ ($\alpha$) if $i_1=l_1, \ldots, i_{s-1}=l_{s-1}, i_s<l_s$, then $w>u$.

\vspace{.1in}

\noindent \ \ \ \ \ \ \ \ \ \ \ ($\beta$) if $i_q=l_q, \forall q$ and $j_1=m_1, \ldots, j_{p-1}=m_{p-1}, j_p<m_p$, then $w>u$.

\vspace{.1in}

\noindent \ \ \ \ \ \ \ \ \ \ \ ($\gamma$) if $i_q=l_q, \forall q, j_r=m_r, \forall r$ and $k_1=n_1, \ldots, k_{t-1}=n_{t-1}, k_t<n_t$, then $w>u$.

\vspace{.1in}

\noindent \ \ \ \ \ \ \ \ \ \ \ ($\delta$) if $i_q=l_q, j_r=m_r, k_z=n_z \forall q, r, z$, then:

\vspace{.1in}

\noindent \ \ \ \ \ \ \ \ \ \ \ \ \ \ \ \ ($I$) if $c_{k_2}=f_{n_2}, \ldots, c_{k_x}=f_{n_x}, c_{k_x-1}<f_{n_x-1}$, then $w<u$.

\vspace{.1in}

\noindent \ \ \ \ \ \ \ \ \ \ \ \ \ \ \ ($II$) if $c_{k_i}=f_{n_i}, \forall i$, then:

\vspace{.1in}
\begin{itemize}
\item[-] if $b_{j_2}=e_{m_2}, \ldots, b_{j_n}=e_{m_n}, b_{j_{n-1}}<e_{m_{n-1}}$, then $w<u$.
\bigbreak
\item[=] if $b_{j_n}=e_{m_n}, \forall n$, then if $a_{i_2}=d_{l_2}, \ldots, a_{i_v}=d_{l_v}, a_{i_{v-1}}<d_{l_{v-1}}$, then $w<u$.
\bigbreak
\item[$\equiv$] if $a_{i_n}=d_{l_n}, \forall n$, then $w=u$. 
\end{itemize}

\end{itemize}
\end{defn}

The same ordering is defined on the subsets $B_{H_2}, B^{\prime}_{H_2}$ and $\mathcal{B}_{H_2}$ of $L$.

\begin{prop}\label{totord}
The set $L$ equipped with the ordering given in Definition~\ref{order}, is a totally ordered set.
\end{prop}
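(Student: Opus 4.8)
The plan is to verify the three defining properties of a total order, namely that the relation from Definition~\ref{order} is irreflexive (or that equality is correctly identified), antisymmetric, and transitive, and moreover that any two elements of $L$ are comparable (totality). First I would observe that the ordering is defined by a hierarchical, lexicographic-style decision procedure: one compares $w$ and $u$ by passing through a fixed sequence of numerical attributes, namely $exp$, then $ind_T$, then $ind_{\tau}$, then $ind_t$, then the starting indices $i_1,\dots$, $j_1,\dots$, $k_1,\dots$ in turn, and finally the tuples of exponents $(c_\ast)$, $(b_\ast)$, $(a_\ast)$ read in the prescribed order. The key structural point is that at each level of this hierarchy one is comparing either natural numbers (for the $exp$ and $ind$ attributes) or tuples of natural numbers under a lexicographic rule (for the index-sequences and exponent-sequences), and the decision at a given level is invoked only when all higher levels have produced equality.

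The key steps, in order, are as follows. I would first set up the comparison as a map $\Phi$ sending each $w\in L$ to a tuple of the attributes listed above, arranged so that the ordering on $L$ is precisely the lexicographic ordering induced by $\Phi$ under the natural (or reverse-natural, where the definition dictates $w>u$ for a smaller index) orderings on each coordinate. Then totality and antisymmetry would follow from the corresponding facts for lexicographic orders on products of totally ordered sets: since $\mathbb{N}\cup\{0\}$ is totally ordered and finite tuples over it are totally ordered lexicographically, the induced order on the product is total. Transitivity likewise reduces to transitivity of lexicographic orders, which I would justify by the standard argument: if $w<u$ and $u<v$, locate the first coordinate at which $w$ and $u$ differ and the first at which $u$ and $v$ differ, and compare the two positions to conclude $w<v$. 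Finally I would confirm that the case $(\equiv)$, where every attribute agrees down to the last exponent, correctly yields $w=u$, i.e.\ that $\Phi$ is injective on $L$, so that equality of images forces equality of words.

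The main obstacle, and the step deserving the most care, is reconciling the bookkeeping in clauses $(\alpha)$--$(\gamma)$ and $(I)$--$(II)$ with a clean lexicographic framework, because the definition mixes conventions: some comparisons of index-sequences yield $w>u$ for the lexicographically smaller sequence, while the exponent-comparisons in $(I)$, $(II)$ read the tuples from the \emph{top} index downward and yield $w<u$ for the smaller entry. I would therefore be explicit about the sign convention on each coordinate of $\Phi$ (equivalently, whether that coordinate contributes the natural order or its reverse), and I would check that the length data $l_t,l_\tau,l_T$ and the constraint $i_1<i_2<k_1<k_2<j_1<j_2$ make the attribute tuples have well-defined, comparable shapes even when two words have different lengths; the convention that absent generators carry exponent $0$ is what makes the exponent-sequences comparable across words of differing length. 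Once these conventions are pinned down, the total-order axioms follow formally, so the substance of the proof lies entirely in this careful translation rather than in any deep argument.
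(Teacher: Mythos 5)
Your proof is correct, but it takes a genuinely different route from the paper. The paper proves transitivity directly by an exhaustive case analysis (six cases, each branching into the subcases of Definition~\ref{order}), and then simply asserts that antisymmetry ``follows similarly'' and that totality holds ``since all possible cases have been considered.'' You instead package the entire comparison into a single map $\Phi$ from $L$ into a product of totally ordered coordinates (with the order reversed on the coordinates governed by clauses $(\alpha)$--$(\gamma)$), and reduce all three axioms to the standard facts about lexicographic orders, adding the one genuinely necessary check that $\Phi$ is injective so that clause $(\equiv)$ really forces $w=u$. Your approach buys uniformity --- antisymmetry and totality come for free from the same lemma rather than being asserted --- and it isolates the only real subtlety in the definition, namely the mixed sign conventions and the comparison of index- and exponent-tuples of differing lengths, which the paper's case analysis passes over silently. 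The paper's approach is more self-contained and concrete, at the cost of length and of leaving antisymmetry unverified. Both arguments establish the proposition; yours is arguably the cleaner formalization, provided you do carry out the explicit bookkeeping of which coordinates of $\Phi$ carry the natural order and which the reversed one, since that translation is where an error could actually hide.
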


\begin{proof}
In order to show that the set $L$ is totally ordered set when equipped with the ordering given in Definition~\ref{order}, we need to show that the ordering relation is antisymmetric, transitive and total. We only show that the ordering relation is transitive. Antisymmetric property follows similarly. Totality follows from Definition~\ref{order} since all possible cases have been considered. Let:

\[
\begin{array}{lcl}
w & = & t_{i_1, i_2}^{a_{i_1, i_2}}\, \tau_{i_3, i_4}^{a_{i_3, i_4}}\, T_{i_5, i_6}^{a_{i_5, i_6}}\\
&&\\
u & = & t_{j_1, j_2}^{b_{j_1, j_2}}\, \tau_{j_3, j_4}^{b_{j_3, j_4}}\, T_{j_5, j_6}^{b_{j_5, j_6}}\\
&&\\
v & = & t_{k_1, k_2}^{c_{k_1, k_2}}\, \tau_{k_3, k_4}^{c_{k_3, k_4}}\, T_{k_5, k_6}^{c_{k_5, k_6}}\\
\end{array}
\]

\noindent such that $w<u$ and $u<v$. Then, one of the following holds:

\smallbreak

\begin{itemize}
\item[a.] Either $exp(w)<exp(u)$, and since $u<v$, we have $exp(u)\leq exp(v)$ and thus, $u<v$.
\smallbreak
\item[b.] Either $exp(w)=exp(u)$ and $ind_T(w)<ind_T(u)$. Then, since $u<v$ we have that either $exp(u)<exp(v)$ (same as in case (a)), or $exp(u)=exp(v)$ and $ind_T(u)\leq ind_T(v)$. Thus, $ind_T(w)<ind_T(v)$ and so we conclude that $w<v$.
\smallbreak
\item[c.] Either $exp(w)=exp(u)$, $ind_T(w)=ind_T(u)$ and $ind_{\tau}(w)<ind_{\tau}(u)$. Then, since $u<v$, we have that either:
\smallbreak
\begin{itemize}
\item[$\bullet$] $exp(u)<exp(v)$, same as in case (a), or
\smallbreak
\item[$\bullet$] $exp(u)=exp(v)$ and $ind_T(u)<ind_T(v)$, same as in case (b), or
\smallbreak
\item[$\bullet$] $exp(u)=exp(v)$, $ind_T(u)=ind_T(v)$ and $ind_{\tau}(u)<ind_{\tau}(v)$. Thus, $w<v$.
\end{itemize}
\smallbreak
\item[d.] Either $exp(w)=exp(u)$, $ind_T(w)=ind_T(u)$, $ind_{\tau}(w)=ind_{\tau}(u)$ and $ind_t(w)<ind_t(u)$. Then, since $u<v$, we have that either:
\smallbreak
\begin{itemize}
\item[$\bullet$] $exp(u)<exp(v)$, same as in case (a), or
\smallbreak
\item[$\bullet$] $exp(u)=exp(v)$ and $ind_T(u)<ind_T(v)$, same as in case (b), or
\smallbreak
\item[$\bullet$] $exp(u)=exp(v)$, $ind_T(u)=ind_T(v)$ and $ind_{\tau}(u)<ind_{\tau}(v)$, same as in case (c), or
\smallbreak
\item[$\bullet$] $exp(u)=exp(v)$, $ind_T(u)=ind_T(v)$, $ind_{\tau}(u)=ind_{\tau}(v)$ and $ind_t(u)<ind_t(v)$. Thus, $w<v$.
\end{itemize}
\smallbreak
\item[e.] Either $exp(w)=exp(u)$, $ind_T(w)=ind_T(u)$, $ind_{\tau}(w)=ind_{\tau}(u)$, $ind_t(w)=ind_t(u)$ and $i_1=j_1, \ldots, i_{s-1}=j_{s-1}, i_s>j_s$. Then, since $u<v$, we have that either: 
\smallbreak
\begin{itemize}
\item[$\bullet$] $exp(u)<exp(v)$, same as in case (a), or
\smallbreak
\item[$\bullet$] $exp(u)=exp(v)$ and $ind_T(u)<ind_T(v)$, same as in case (b), or
\smallbreak
\item[$\bullet$] $exp(u)=exp(v)$, $ind_T(u)=ind_T(v)$ and $ind_{\tau}(u)<ind_{\tau}(v)$, same as in case (c), or
\smallbreak
\item[$\bullet$] $exp(u)=exp(v)$, $ind_T(u)=ind_T(v)$, $ind_{\tau}(u)=ind_{\tau}(v)$ and $ind_t(u)=ind_t(v)$, same as in case (d), or
\smallbreak
\item[$\bullet$] $exp(u)=exp(v)$, $ind_T(u)=ind_T(v)$, $ind_{\tau}(u)=ind_{\tau}(v)$, $ind_t(u)=ind_t(v)$ and $j_1=k_1, \ldots, j_{p-1}=k_{p-1}, j_p>k_p$. Then:
\smallbreak
\begin{itemize}
\item[(i)] if $s=p$, then $i_s>j_s>k_s$ and thus $w<v$.
\smallbreak
\item[(ii)] if $s<p$, then $i_s>j_s=k_s$ and thus $w<v$.
\end{itemize}
\end{itemize}
\smallbreak
\item[f.] Either $exp(w)=exp(u)$, $ind_T(w)=ind_T(u)$, $ind_{\tau}(w)=ind_{\tau}(u)$, $ind_t(w)=ind_t(u)$, $i_s=j_s, \forall s$ and $b_{i_6}=a_{i_6}, \ldots, b_{i_x}=a_{i_x}, b_{i_x-1}<a_{i_x-1}$. Then, since $u<v$, we have that either: 
\smallbreak
\begin{itemize}
\item[$\bullet$] $exp(u)<exp(v)$, same as in case (a), or
\smallbreak
\item[$\bullet$] $exp(u)=exp(v)$ and $ind_T(u)<ind_T(v)$, same as in case (b), or
\smallbreak
\item[$\bullet$] $exp(u)=exp(v)$, $ind_T(u)=ind_T(v)$ and $ind_{\tau}(u)<ind_{\tau}(v)$, same as in case (c), or
\smallbreak
\item[$\bullet$] $exp(u)=exp(v)$, $ind_T(u)=ind_T(v)$, $ind_{\tau}(u)=ind_{\tau}(v)$ and $ind_t(u)=ind_t(v)$, same as in case (d), or
\smallbreak
\item[$\bullet$] $exp(u)=exp(v)$, $ind_T(u)=ind_T(v)$, $ind_{\tau}(u)=ind_{\tau}(v)$, $ind_t(u)=ind_t(v)$, and $j_1=k_1, \ldots, j_{p-1}=k_{p-1}, j_p>k_p$, same as in case (e), or
\smallbreak
\item[$\bullet$] $exp(u)=exp(v)$, $ind_T(u)=ind_T(v)$, $ind_{\tau}(u)=ind_{\tau}(v)$, $ind_t(u)=ind_t(v)$, $j_s=k_s, \forall s$ and $c_{k_6}=b_{j_6}, \ldots, c_{k_x}=b_{j_x}, c_{k_x-1}>b_{n_x-1}$. Then, 
\smallbreak
\smallbreak
\begin{itemize}
\item[(i)] if $s=x$, then $a_{i_s-1}<b_{i_s-1}<c_{i_s-1}$ and thus $w<v$.
\smallbreak
\item[(ii)] if $s<x$, then $a_{i_s-1}<b_{i_s-1}=c_{i_s-1}$ and thus $w<v$.
\end{itemize}
\end{itemize}
\end{itemize}
\smallbreak

So, we conclude that the ordering relation is transitive.
\end{proof}

\begin{remark}
Proposition~\ref{totord} also holds for the sets $B_{H_2}$, $B^{\prime}_{H_2}$ and $\mathcal{B}_{H_2}$.
\end{remark}

\begin{prop}\label{totord2}
The sets $B_{H_2}$ $B^{\prime}_{H_2}$ and $\mathcal{B}_{H_2}$ are totally ordered and well-ordered.
\end{prop}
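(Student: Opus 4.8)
The plan is to deduce both properties from Proposition~\ref{totord} together with one structural feature special to these three sets: for each fixed value of the total exponent $exp$, only finitely many of their elements occur. Totality will be immediate, since $B_{H_2}$, $B^{\prime}_{H_2}$ and $\mathcal{B}_{H_2}$ are subsets of $L$ and the ordering of Definition~\ref{order} is defined on all of $L$; each of them therefore inherits the ordering of $L$, which is total by Proposition~\ref{totord}, and the restriction of a total order to a subset is again total. Thus it will remain to prove that each of the three sets is well-ordered, i.e.\ that every non-empty subset has a least element.

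First I would record the finiteness of the ``$exp$-levels''. Every element of $B^{\prime}_{H_2}$ (resp.\ $\mathcal{B}_{H_2}$) has the form $t^i\,{\tau^{\prime}_1}^k\,{T^{\prime}_2}^j$ (resp.\ $t^i\,{\tau^{\prime}}^k\,{T^{\prime}}^j$) with fixed looping indices, so it is determined by the triple $(i,k,j)$ and satisfies $exp=i+k+j$; likewise every element of $B_{H_2}$ equals $t_{0,i}\,\tau_{i+1, i+k}\,T_{i+k+1, i+k+j}$, determined by $(i,k,j)$ with $exp=i+k+j+1$. In all three cases distinct triples give distinct elements, and for a fixed $N\in\mathbb{N}\cup\{0\}$ the equation $i+k+j=N$ (resp.\ $i+k+j=N-1$) has only finitely many solutions in non-negative integers. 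Hence for every $N$ the set of elements of the given basis with $exp=N$ is finite. This is exactly the place where the restricted shape of these sets is used: in the ambient set $L$ the looping indices are free, so an $exp$-level of $L$ need not be finite, whereas here the indices are pinned to the exponents.

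With this in hand the well-ordering will follow by reduction to the well-ordering of $\mathbb{N}$. Given a non-empty subset $S$ of, say, $B_{H_2}$ (the other two cases being identical), I would set $E=\{exp(w):w\in S\}\subseteq\mathbb{N}\cup\{0\}$; this is a non-empty set of non-negative integers, so it has a least element $N_0$, and the subset $S_0=\{w\in S:exp(w)=N_0\}$ is non-empty and, by the previous step, finite. A finite non-empty subset of a totally ordered set has a minimum, so $S_0$ has a least element $m$; finally part (A) of Definition~\ref{order} shows that every $w\in S$ with $exp(w)>N_0$ satisfies $w>m$, so $m$ is the least element of all of $S$. The main obstacle is precisely the finiteness of the $exp$-levels established in the second step, since it is what distinguishes these three sets from $L$ and rests on the fact that in each basis the looping indices are completely determined by the exponent data; once that is secured, the passage to minima of finite totally ordered sets and to the well-ordering of $\mathbb{N}$ is routine.
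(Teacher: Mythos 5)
Your proof is correct, but it takes a genuinely different route from the paper, and in fact a more careful one. The paper's own proof handles totality exactly as you do (inheritance from $L$ via Proposition~\ref{totord}), but for well-ordering it only observes that the unknot $t^0\tau^0T^0$ is the minimum element of the whole set $B$ and concludes from this that $B$ is well-ordered. That inference is not valid in general: a totally ordered set can have a global minimum without every non-empty subset having a least element. Your argument supplies the missing ingredient, namely that each $exp$-level of $B_{H_2}$, $B^{\prime}_{H_2}$ and $\mathcal{B}_{H_2}$ is finite (because in these sets the looping indices are determined by the exponent data, unlike in $L$), so that a non-empty subset $S$ reduces first to the least value $N_0$ of $exp$ on $S$ by the well-ordering of $\mathbb{N}$, then to a minimum of the finite non-empty level set via totality, with part (A) of Definition~\ref{order} guaranteeing that this minimum is below everything of larger $exp$. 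This is the standard and correct way to establish well-ordering for such graded monomial sets, and it is also the structure that the induction arguments in the proofs of Theorems~\ref{newbasis1} and~\ref{newbasis2} implicitly rely on; your version is the one that actually supports them.
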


\begin{proof}
Let $B$ denote $B_{H_2}$, $B^{\prime}_{H_2}$ or $\mathcal{B}_{H_2}$. Since $B\subset L$, $B$ inherits the property of being a totally ordered
set from $L$. Moreover, the element $t^{0}\tau^0T^0$ that represents the unknot, is the minimum element of $B$ and so $B$ is a well-ordered set.
\end{proof}

We also introduce the notion of homologous words in the sets $B_{H_2}, B^{\prime}_{H_2}$ and $\mathcal{B}_{H_2}$ as follows:

\begin{defn}\label{hom}\rm
Let $w\in B_{H_2}$. We define the homologous word of $w$ in $B^{\prime}_{H_2}$ to be $w^{\prime}\ =\ t^{l_t}\, {\tau_1^{\prime}}^{l_{\tau}}\, {T_2^{\prime}}^{l_T}$, denoted by $w\overset{BB^{\prime}}{\sim} w^{\prime}$. Similarly, we define the homologous word of $w^{\prime}$ in $\mathcal{B}_{H_2}$ to be $w^{\prime \prime}\ =\ t^{l_t}\, \tau^{l_{\tau}}\, T^{l_T}$, denoted by $w^{\prime}\overset{B^{\prime}\mathcal{B}}{\sim} {w}^{\prime \prime}$. Finally, we denote by $w\overset{B\mathcal{B}}{\sim} {w}^{\prime \prime}$ the homologous word of $w$ in $\mathcal{B}_{H_2}$.
\end{defn}

\subsection{The basis $B^{\prime}_{H_2}$} 

As mentioned in Remark~\ref{sets}, a basic element in the KBSM($H_2$) consists of three different sets of strands depending on the loopings $t^{\prime}_i$'s, $\tau^{\prime}_i$'s and $T^{\prime}_i$'s. Obviously, when the Kauffman bracket skein relation is applied in a set of loopings, the other two sets of loopings remain unaffected. This follows naturally by parting an algebraic mixed braid back to a geometric mixed braid as shown in Figure~\ref{partequi}. Thus, we have the following:

\begin{figure}[!ht]
\begin{center}
\includegraphics[width=4.2in]{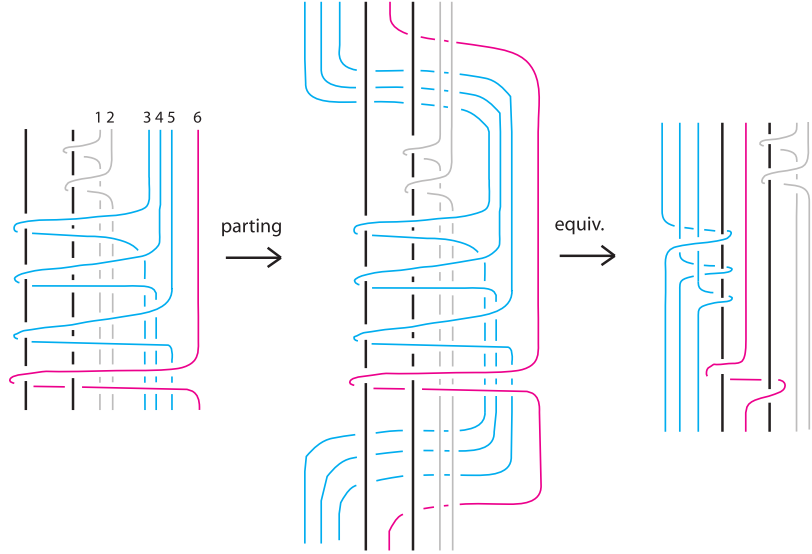}
\end{center}
\caption{ Parting an element in $B_{H_2}$. }
\label{partequi}
\end{figure}

\begin{prop}\label{tauind}
In the Kauffman bracket skein module of $H_2$ a monomial of the form $\alpha\alpha_1^{\prime}\ldots \alpha_n^{\prime}$ can be written as $\alpha^{n+1}$ followed by lower order terms in $B^{\prime}_{H_2}$, where $\alpha$ is either $t$, $\tau$ or $T$.
\end{prop}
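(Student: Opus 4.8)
The plan is to treat the three possibilities $\alpha\in\{t,\tau,T\}$ by one and the same argument, after first decoupling the three families of loopings. As recorded in Remark~\ref{sets} and justified by parting an algebraic mixed braid back to a geometric one (Figure~\ref{partequi}), applying the Kauffman bracket skein relation inside one family of loop generators leaves the other two families untouched; so I would reduce at once to a monomial built from a single type, namely $\alpha\alpha_1'\cdots\alpha_n'$, and recover the $\tau$- and $T$-cases verbatim. The bookkeeping device throughout is homological: from Definition~\ref{loopings} one has $\alpha_i' = \sigma_i\alpha_{i-1}'\sigma_i^{-1}$, so each $\alpha_i'$ is the \emph{same} loop around the relevant hole, merely based at the $(i+1)$-st moving strand; hence $\alpha\alpha_1'\cdots\alpha_n'$ represents $n+1$ parallel copies of that loop, and the only element of $B'_{H_2}$ carrying that many loops around the hole is the single monomial $\alpha^{n+1}$, which has $exp=n+1$.

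Next I would resolve, via $\sigma_i = A + A^{-1}e_i$ and $\sigma_i^{-1}=A^{-1}+Ae_i$ (here $e_i$ is the cup-cap on strands $i,i+1$), all the conjugating crossings occurring in $\alpha\alpha_1'\cdots\alpha_n'$; equivalently one may induct on $n$, resolving only the outermost block $\sigma_n\cdots\sigma_1\,\alpha\,\sigma_1^{-1}\cdots\sigma_n^{-1}$. The point is a coefficient count: in the \emph{all-identity} smoothing each $\alpha_i'$ contributes its $i$ positive crossings as factors $A$ and its $i$ negative crossings as factors $A^{-1}$, these cancel, every loop slides back onto the first strand, and one is left with precisely $\alpha^{n+1}$ carrying coefficient $1$. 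Every other smoothing inserts at least one turnback $e_i$; such a turnback either caps off a loop or fuses two adjacent same-type loops, and after evaluating the resulting closed components by the loop relation of the skein module this strictly lowers the number of loops about the hole, i.e.\ produces terms with $exp<n+1$.

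Finally I would read off the triangular statement from the ordering of Definition~\ref{order}. The dominant term $\alpha^{n+1}$ realises the maximal exponent sum $exp=n+1$, whereas each correction term has strictly smaller exponent sum; by clause~(A) of the ordering every correction term therefore lies strictly below $\alpha^{n+1}$, and expanding those corrections further into $B'_{H_2}$ cannot reintroduce loops about the hole, so all contributions stay strictly below $\alpha^{n+1}$. This yields $\alpha\alpha_1'\cdots\alpha_n' = \alpha^{n+1} + (\text{strictly lower order terms of } B'_{H_2})$ with leading coefficient $1$. The hard part is the middle paragraph: one must check that \emph{no} cup-cap smoothing can preserve the count of loops around the hole --- that inserting an $e_i$ between two parallel same-type loops always collapses, after the local skein evaluations, to a configuration homologous to strictly fewer parallel loops. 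This is the one genuinely geometric input; granting it, the maximality of $exp(\alpha^{n+1})$ forces both the triangularity and the fact that the coefficient of $\alpha^{n+1}$ is exactly $1$.
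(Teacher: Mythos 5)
Your reduction to a single family of loopings (via parting) and your appeal to clause (A) of the ordering for the triangularity are fine, but the middle paragraph --- which you yourself flag as the crux --- is not merely unproven: it is false, and the strategy of resolving the conjugating crossings of $\alpha\alpha_1'\cdots\alpha_n'$ cannot work. Test it at $n=1$: $tt_1'=t\sigma_1t\sigma_1^{-1}$, and expanding $\sigma_1=A+A^{-1}e_1$, $\sigma_1^{-1}=A^{-1}+Ae_1$ gives the four closures $\widehat{t\,t}+A^2\,\widehat{t\,t\,e_1}+A^{-2}\,\widehat{t\,e_1\,t}+\widehat{t\,e_1\,t\,e_1}$. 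The all-identity term is not $t^2$ with coefficient $1$: after the loops slide onto the first strand, the remaining bare strands close to split trivial circles, so the term is $\delta\,t^2$ with $\delta=-A^2-A^{-2}$ (and $\delta^n$ for general $n$, which is not even invertible in $\mathbb{Z}[A^{\pm1}]$, so it could not serve as a diagonal entry of the change-of-basis matrix). Worse, the all-turnback term $\widehat{t\,e_1\,t\,e_1}$ is again two disjoint circles each encircling the hole, i.e.\ the original element $tt_1'$: a turnback between two same-type loops does close off a component, but that component still winds around the hole and cannot be removed by the relation $L\sqcup O=\delta L$, which applies only to null-homotopic split circles. Collecting terms, the expansion reads $tt_1'=(\delta+A^2+A^{-2})\,t^2+tt_1'$, i.e.\ $0=0$ --- a tautology. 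This is unavoidable: the conjugating crossings in $\sigma_i\cdots\sigma_1\,\alpha\,\sigma_1^{-1}\cdots\sigma_i^{-1}$ are combing artifacts, and resolving them merely reconstitutes the link you started with.

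The paper's proof takes the step you skipped: it first performs an isotopy creating a genuine clasp between the last parallel loop and its neighbour, and resolves \emph{that} crossing. This produces exactly two terms, $\alpha\alpha_1'\cdots\alpha_{n-2}'$ (exponent sum drops by two, so lower by clause (A)) and $\alpha\alpha_1'\cdots{\alpha_{n-1}'}^{2}$ (same exponent sum but smaller $t$-index, so lower by clause (B)), and then strong induction on the well-ordering of Proposition~\ref{totord2} finishes the argument; the leading coefficient that results is a power of $A$ (the paper deliberately suppresses these scalars), not $1$. Finally, your homological bookkeeping is also unreliable: an orientation-reversing smoothing drops the winding number about the hole by two, so ``number of loops about the hole'' is not monotone under smoothings and cannot by itself force either the triangularity or the identification of the leading term.
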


\begin{proof}

We prove Proposition~\ref{tauind} by strong induction on the order of the initial monomial. We only prove the case where $\alpha$ is $t$. The other cases follow similarly. Note also that in the figures that follow we omit the scalars that appear after we apply the Kauffman bracket relations.

\smallbreak

Consider the monomial $tt_1^{\prime}\ldots t_n^{\prime}$. The base of the induction is illustrated in Figure~\ref{tau1}, where we start from the monomial $tt_1^{\prime}\in B_{H_2}$ and applying the Kauffman bracket skein relation we obtain the $BB^{\prime}$-homologous word $t^2\in B^{\prime}_{H_2}$ and the unknot, which is of lower order than $t^2$.

\begin{figure}[!ht]
\begin{center}
\includegraphics[width=5.6in]{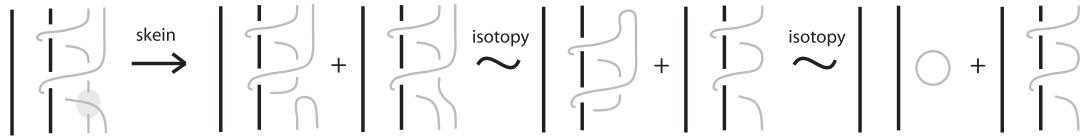}
\end{center}
\caption{ Base of the induction of Prop.~\ref{tauind}. }
\label{tau1}
\end{figure}

We assume now that Proposition~\ref{tauind} holds for all monomials in $t$'s of less order than $tt_1^{\prime}\ldots t_n^{\prime}$. Then for $tt_1^{\prime}\ldots t_n^{\prime}$ we have that after applying the Kauffman bracket skein relation we obtain the monomials $tt_1^{\prime}\ldots t_{n-2}^{\prime}\in B^{\prime}_{H_2}$ and $tt_1^{\prime}\ldots {t_{n-1}^{\prime}}^2\in L$, which are of lower order than the initial monomial $tt_1^{\prime}\ldots t_n^{\prime}\in B_{H_2}$, and thus, Proposition~\ref{tauind} is true for all monomials in $t$'s (see Figure~\ref{tau2}).

\begin{figure}[!ht]
\begin{center}
\includegraphics[width=5.6in]{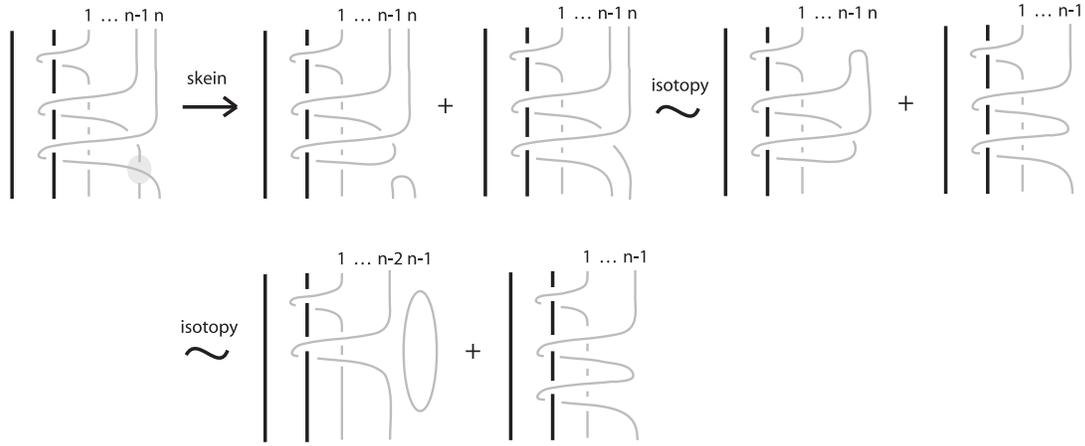}
\end{center}
\caption{ Proof of Proposition~\ref{tauind}. }
\label{tau2}
\end{figure}

\end{proof}

\begin{remark}\rm
Note that in the proof of Proposition~\ref{tauind}, when we apply the Kauffman bracket skein relation on $tt_1^{\prime}\ldots t_n^{\prime}$, we obtain the monomial $tt_1^{\prime}\ldots {t_{n-1}^{\prime}}^2\in L\backslash B_{H_2}$, but applying the skein relation on all crossings, this monomial is expressed as a combination of elements in $B^{\prime}_{H_2}$ of lower order than the $BB^{\prime}$-homologous word.
\end{remark}

In order to prove Theorem~\ref{newbasis1}, we now part again the resulting geometric mixed braids as illustrated in Figure~\ref{bas1} and the proof is concluded.

\begin{figure}[!ht]
\begin{center}
\includegraphics[width=2.8in]{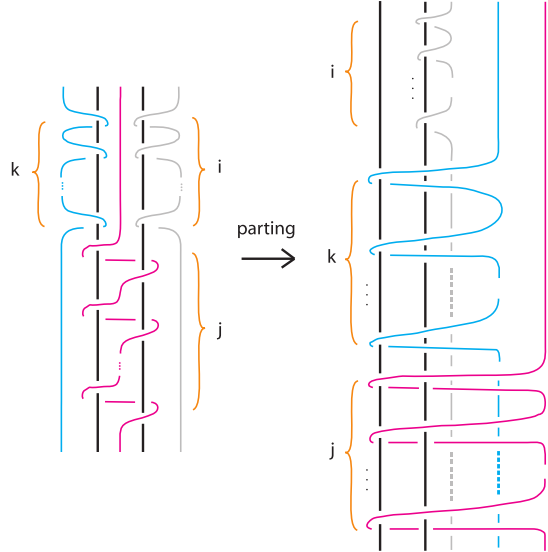}
\end{center}
\caption{ The new basis $B^{\prime}_{H_2}$. }
\label{bas1}
\end{figure}

\subsection{The basis $\mathcal{B}_{H_2}$} 

We now pass from the basis $B^{\prime}_{H_2}$ that consists of monomials in $t$'s, $\tau_1^{\prime}$'s and $T_2^{\prime}$'s, to elements in the set $\mathcal{B}_{H_2}$ that consists of monomials in $t$, $\tau$ and $T$. This set forms a more natural basis for KBSM($H_2$) on the braid level, since there are no braiding crossings. We prove Theorem~\ref{newbasis2} by showing that an element $w$ in the basis $B^{\prime}_{H_2}$ can be expressed as a sum of the $B^{\prime}\mathcal{B}$-homologous element $w^{\prime \prime}$ and elements in $\mathcal{B}_{H_2}$ of lower order than $w^{\prime \prime}$. We prove that by strong induction on the order of $w\in B^{\prime}_{H_2}$.

\smallbreak

\begin{proof}
The base of the induction is shown in Figure~\ref{tbasis} where we start from the element $t\tau_1^{\prime}$ and applying the Kauffman bracket skein relation we obtain the $B\mathcal{B}$-homologous element $t\tau$ and $t<t\tau$. See Figure~\ref{tbasis22} for an example of the same procedure applied on the elements $tT_1^{\prime}$ and $\tau T_1^{\prime}$.

\begin{figure}[!ht]
\begin{center}
\includegraphics[width=4.7in]{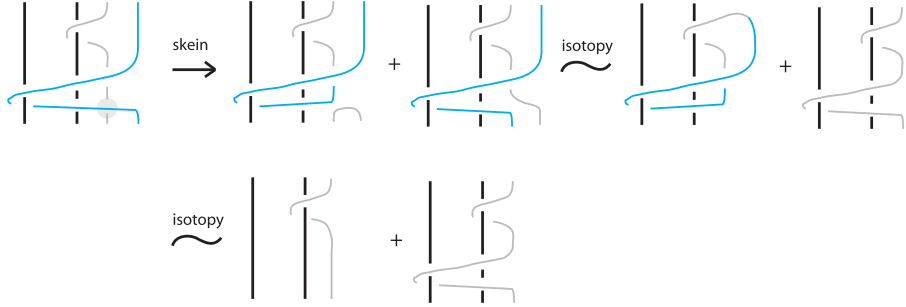}
\end{center}
\caption{ The base of the induction for Theorem~\ref{newbasis2}. }
\label{tbasis}
\end{figure}

\begin{figure}[!ht]
\begin{center}
\includegraphics[width=4.7in]{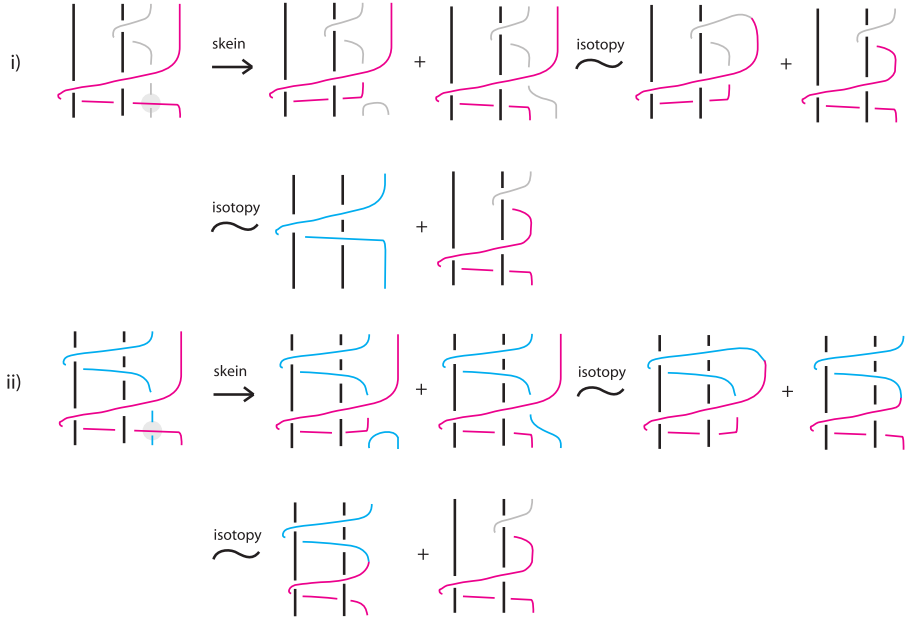}
\end{center}
\caption{ Converting $tT_1^{\prime}$ and $\tau T_1^{\prime}$ to elements in $\mathcal{B}_{H_2}$.}
\label{tbasis22}
\end{figure}

We now assume that the statement holds for all monomials of less order than $w=t^i{\tau_1^{\prime}}^k {T_2^{\prime}}^j\in B^{\prime}_{H_2}$. Then, for $t^i{\tau_1^{\prime}}^k {T_2^{\prime}}^j$ we have that after applying the Kauffman bracket skein relation, $t^i{\tau_1^{\prime}}^k {T_2^{\prime}}^j$ is written as a sum of elements in $L$ of lower order than $w$ and in particular, $t^i{\tau_1^{\prime}}^{k-1}\tau_2^{\prime}{T_2^{\prime}}^{j-1}$ and $t^i{\tau_1^{\prime}}^{k}{T_1^{\prime}}^{j}$ (see Figure~\ref{tbasis2}). Thus, by the induction step, the statement holds for $w$.

\begin{figure}[!ht]
\begin{center}
\includegraphics[width=5.5in]{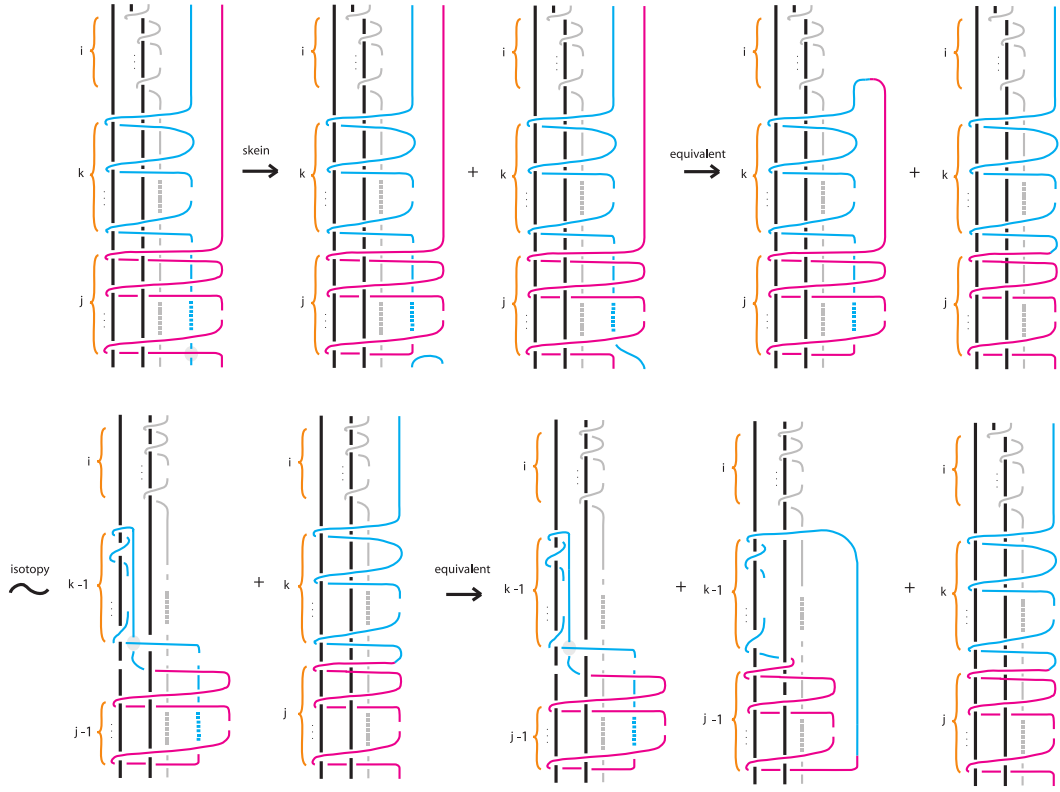}
\end{center}
\caption{ Proof of Theorem~\ref{newbasis2}. }
\label{tbasis2}
\end{figure}

\end{proof}

\begin{figure}[!ht]
\begin{center}
\includegraphics[width=4in]{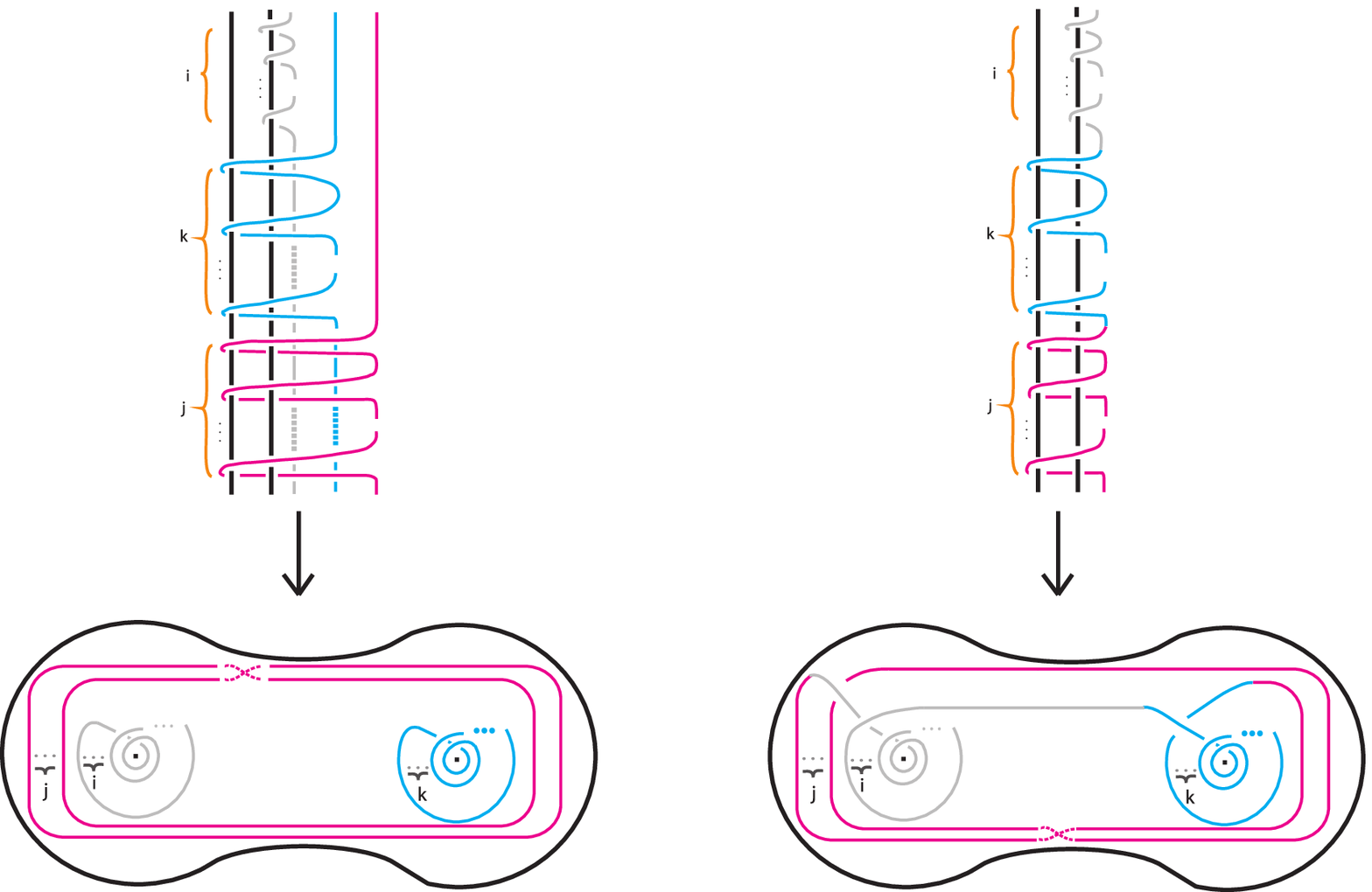}
\end{center}
\caption{The sets $B^{\prime}_{H_2}$ and $\mathcal{B}_{H_2}$.}
\label{basesall}
\end{figure}

\section{Conclusions}

In this paper we present two different bases for the Kauffman bracket skein module of the handlebody of genus 2 via braids. On the level of braids the basis $\mathcal{B}_{H_2}$ is more natural since it consists of elements with no moving crossings (see Figure~\ref{basesall}). As explained in \cite{D, DL1, DL2, DL3, DL4}, the braid band moves that reflect isotopy in a closed-connected-oriented (c.c.o.) 3-manifold $M$ obtained by $H_2$ by surgery, are naturally described with the use of the new basis $\mathcal{B}_{H_2}$. The idea is that using this basis we will compute the Kauffman bracket skein module of $M$. Moreover, using the basis $\mathcal{B}_{H_2}$, in \cite{D2} we work on the computation of the Kauffman bracket skein module of the complement of the trefoil knot via braids.

\end{document}